\newtheorem{theorem}{Theorem}[section]
\newtheorem{claim}[theorem]{Claim}
\newtheorem{corollary}[theorem]{Corollary}
\theoremstyle{definition}
\theoremstyle{remark}
\numberwithin{equation}{section}
\begin{document}

\vspace{0.5in}

\title[Ample continua in Cartesian products of continua]%
{Ample continua in Cartesian products of continua}

\author{J. P. Boro\'nski}
\author{D. R. Prier}
\author{M. Smith}
\address[J. P. Boro\'nski]{National Supercomputing Centre IT4Innovations, Division of the University of Ostrava,
Institute for Research and Applications of Fuzzy Modeling,
30. dubna 22, 701 03 Ostrava,
Czech Republic -- and -- Faculty of Applied Mathematics,
AGH University of Science and Technology,
al. Mickiewicza 30,
30-059 Krak\'ow,
Poland}
\email{jan.boronski@osu.cz}
\address[D. R. Prier]{Mathematics Department, Gannon University,
109 University Square,
Erie, PA 16541, USA}
\email{prier001@gannon.edu}
\address[M. Smith]{Department of Mathematics and Statistics, Auburn University, AL 36849, USA}
\email{smith01@auburn.edu}
\subjclass[2000]{primary 54F15, 54B10, secondary 54F50.}

\keywords{}

\begin{abstract}
We show that the Cartesian product of the arc and a solenoid has the fupcon property, therefore answering a question raised by Illanes. This combined with Illanes' result implies that the product of a Knaster continuum and a solenoid has the fupcon property, therefore answering a question raised by Bellamy and \L ysko in the affirmative. Finally, we show that a product of two Smith's nonmetric pseudo-arcs has the fupcon property. 
\end{abstract}
\maketitle
\section{Introduction}
The present paper is concerned with the property of having arbitrarily small open neighborhoods for continua in Cartesian products of continua; i.e. given a continuum $M\subseteq X\times Y$ we are interested if
\vspace{0.25cm}

\noindent
(*) \textit{for every open neighborhood $U$ of $M$ there exists an open and connected set $V$ such that $M\subseteq V\subseteq U$.}
\vspace{0.25cm}

\noindent
The property (*) is closely related to the property of being an ample\footnote{The notion of an ample continuum was introduced by Prajs and  Whittington in \cite{Prajs}.} continuum in the product. Recall that $M$ is \textit{ample} in $X\times Y$ provided that for each open subset $U\subseteq X\times Y$ such that $M\subseteq U$, there exists a subcontinuum $L$ of $X\times Y$ such that $M\subseteq \operatorname{int}_{X\times Y}(L)\subseteq L\subseteq U$. In fact, according to \cite{Bellamy}, the two properties are equivalent in the class of Kelley continua. Motivation for the study of ample continua comes from fact that in the hyperspace $C(X\times Y)$ of subcontinua of $X\times Y$ ample continua are the points where $C(X\times Y)$ is locally connected. In this context in \cite{Bellamy} Bellamy and \L ysko studied the \textit{fupcon}\footnote{The abbreviation \textit{fupcon} stands for \textit{full projections imply connected open neighborhoods}. It was introduced by Illanes in \cite{Illanes}.} property of Cartesian products. The product of continua $X\times Y$ has the fupcon property if whenever $M\subseteq X\times Y$ is a continuum with full projections onto coordinate spaces (i.e. $\pi_X(M)=X$ and $\pi_Y(M)=Y$) then $M$ has the property (*), and the notion naturally generalizes to Cartesian products of more than two continua. Bellamy and \L ysko showed that arbitrary Cartesian products of Knaster continua and arbitrary Cartesian products of pseudo-arcs have the fupcon property. Furthermore, the property (*) for subcontinua of such products is in fact equivalent to the property of having full projections onto all coordinate spaces. The authors also showed that the diagonal in a Cartesian square $G$ of a compact and connected topological group has the property (*) if and only if $G$ is locally connected, and therefore if $G$ is a solenoid then $G\times G$ does not have the fupcon property. Important related results on ample diagonals can be found in the recent work of Prajs \cite{Prajs2}. Motivated by the aforementioned results, Bellamy and \L ysko raised the following question.
\vspace{0.25cm}

\noindent
\textbf{Question 1.(Bellamy\&\L ysko, \cite{Bellamy})}\textit{ Let $K$ be a Knaster continuum and $S$ be a solenoid. Does $K\times S$ have the fupcon property?}
\vspace{0.25cm}

\noindent

A partial step towards a solution to the above problem was achieved by Illanes, who showed the following.
\vspace{0.25cm}

\noindent
\textbf{Theorem A. (Illanes, \cite{Illanes})}\textit{ Let $X$ be a continuum such that $X \times [0, 1]$ has the fupcon
property. Then for each Knaster continuum $K$, $X \times K$ has the fupcon
property.}
\vspace{0.25cm}

\noindent
Consequently, Question 1 was reduced to the following, potentially simpler problem.
\vspace{0.25cm}

\noindent
\textbf{Question 2.(Illanes, \cite{Illanes})}\textit{ Let $S$ be a solenoid. Does $[0,1]\times S$ have the fupcon property?}
\vspace{0.25cm}

\noindent
We answer this question in the affirmative, and in turn obtain positive answer to Question 1.
\begin{theorem}\label{main1}
Let $S$ be a solenoid. Then $[0,1]\times S$ has the fupcon property. 
\end{theorem}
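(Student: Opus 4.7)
The plan is to lift the (trivially satisfied) fupcon property of $[0,1]\times S^1$ to $[0,1]\times S$ via the inverse limit representation $S=\varprojlim(S^1_k,f_k)$, where each bonding map $f_k\colon S^1_{k+1}\to S^1_k$ is a covering map of degree $d_k\ge 2$. Let $\rho_k\colon[0,1]\times S\to[0,1]\times S^1_k$ be the projection from the inverse limit. Given $M\subseteq[0,1]\times S$ with $\pi_{[0,1]}(M)=[0,1]$, $\pi_S(M)=S$, and an open neighborhood $U\supseteq M$, compactness of $M$ produces an index $k$ and an open set $U_k\subseteq[0,1]\times S^1_k$ with $M\subseteq\rho_k^{-1}(U_k)\subseteq U$; by local connectedness of $[0,1]\times S^1_k$ I may shrink $U_k$ to its component containing $\rho_k(M)$ and assume it connected.

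The heart of the argument will be to show that $U_k$ cannot be simply connected. Indeed, if it were, then the Cantor bundle $\rho_k^{-1}(U_k)\to U_k$ (the restriction of the Cantor bundle $[0,1]\times S\to[0,1]\times S^1_k$) would be trivial, giving $\rho_k^{-1}(U_k)\cong U_k\times C$ for a Cantor set $C$. The connected set $M$ would then be confined to a single leaf $U_k\times\{c\}$, a path-connected subset of $[0,1]\times S$. Its image $\pi_S(M)$ would therefore lie in a single path-component --- a composant --- of $S$, which is a proper subset, contradicting $\pi_S(M)=S$. So the map $\pi_1(U_k)\to\pi_1([0,1]\times S^1_k)=\calk$ has nonzero image $e\calk$ for some $e\ge 1$.

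I will then invoke standard covering-space arithmetic: the iterated preimages $U_{k+m}:=(\operatorname{id}\times(f_{k+m}\circ\cdots\circ f_{k+1}))^{-1}(U_k)\subseteq[0,1]\times S^1_{k+m}$ have $\prod_{j=1}^m \gcd(e_{j-1},d_{k+j})$ components, where $e_0=e$ and $e_j:=e_{j-1}/\gcd(e_{j-1},d_{k+j})$ is the winding-generator of a component at level $k+j$. Since $(e_j)$ is a non-increasing sequence of positive integers, it stabilizes at some level $m^*$, and the number of components of $U_{k+m}$ is a fixed integer $N$ for $m\ge m^*$. Consequently $\rho_k^{-1}(U_k)=\rho_{k+m^*}^{-1}(U_{k+m^*})$ decomposes into $N$ open pairwise-disjoint sets $V_1,\ldots,V_N$, each being the $\rho_{k+m^*}$-preimage of a single component of $U_{k+m^*}$.

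The main obstacle will be to show that each $V_i$ is itself connected, so that the one containing $M$ is the desired open connected neighborhood $V\subseteq U$. This is an analogue of the classical fact that a solenoid is connected: beyond level $m^*$, the winding-generator of each component is coprime to every subsequent bonding degree, so the monodromy of the Cantor bundle --- the $\pi_1$-action on the profinite fiber --- has dense orbits, which forces the relevant inverse limit to be connected. Making this monodromy/density argument precise in the presence of the $[0,1]$-factor, and showing it survives restriction to a single component at level $m^*$, is where the delicate part of the proof lies.
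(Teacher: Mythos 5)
Your strategy is sound and completable, but it takes a genuinely different route through the key step, and it is worth comparing the two. Both arguments pass to a finite level of the inverse limit $[0,1]\times S=\varprojlim\,([0,1]\times \mathbb{S}_i)$ and pull back a connected open neighborhood (your compactness step, producing $k$ with $M\subseteq\rho_k^{-1}(U_k)\subseteq U$, is the right way to do this). The difference is how connectedness of the pullback is obtained. The paper works with the projected continuum $M_i$ itself: it first shows $M_i$ is \emph{essential} in the annulus (an inessential $M_i$ lies in a disk, so its lifts to high levels shrink in diameter, contradicting $\pi_S(M)=S$ --- a more elementary substitute for your bundle-triviality/composant argument), and then shows $\tau_i^{-1}(M_i)$ is connected by a separation argument: two distinct components would be disjoint essential continua exchanged by the deck rotation, yet the one equal to $M_{i+1}$ meets both boundary circles (full projection onto $[0,1]$) while the other, being essential, separates them. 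This forces your winding generator to be $e=1$ from the outset: for any connected open $U_i\supseteq M_i$, every component of $\tau_i^{-1}(U_i)$ maps onto $U_i$ and hence meets the connected deck-invariant set $\tau_i^{-1}(M_i)$, so $\tau_i^{-1}(U_i)$ is connected. Thus your gcd-stabilization bookkeeping is correct but is handling a case ($e>1$) that the geometry of the problem actually excludes; the price the paper pays for this shortcut is that it uses the $[0,1]$ factor in an essential way, whereas your monodromy formulation is more robust. Two smaller points. First, ``$U_k$ is simply connected'' is not the hypothesis your leaf argument refutes nor the one you need: the restricted bundle is trivial exactly when $\pi_1(U_k)$ maps to $0$ in $\pi_1$ of the annulus, and your argument rules out precisely that, which is also the conclusion you then use --- just rephrase the dichotomy. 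Second, your flagged ``delicate part'' is real but closes as you indicate: writing $\rho_{k+m^*}^{-1}(V'_i)$ as $\widetilde{V'_i}\times_{\pi_1(V'_i)}C$, every component is the closure of a single leaf, that closure is the image of $\widetilde{V'_i}\times\overline{O}$ for $O$ the monodromy orbit, and coprimality of $e_{m^*}$ to all later degrees makes $O$ dense, leaving one component. Some such argument is needed on either route, since $\rho_k^{-1}(U_k)$ is an inverse limit of connected but non-compact sets, where connectedness is not automatic; your proposal is more explicit about this point than the paper is.
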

\begin{theorem}\label{main2}
Let $S$ be a solenoid and $K$ be a Knaster continuum. Then $K\times S$ has the fupcon property. 
\end{theorem}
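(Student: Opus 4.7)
The plan is to derive Theorem \ref{main2} as an immediate consequence of Theorem A of Illanes combined with Theorem \ref{main1}. First I would note that the fupcon property of a Cartesian product is symmetric in its two factors: the coordinate-swap homeomorphism $X\times Y\to Y\times X$ carries subcontinua with full projections to subcontinua with full projections, and open connected neighborhoods to open connected neighborhoods. Hence Theorem \ref{main1} equivalently asserts that $S\times[0,1]$ has the fupcon property.

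With this observation in hand, I would apply Theorem A taking the generic continuum $X$ in its statement to be the solenoid $S$. The hypothesis of Theorem A, namely that $S\times[0,1]$ has the fupcon property, is supplied by Theorem \ref{main1}, and its conclusion is that $S\times K$ has the fupcon property for every Knaster continuum $K$, which by the symmetry remark is the same as $K\times S$ having the fupcon property. There is no separate obstacle to overcome in the present theorem; the substantive work has been discharged in the proof of Theorem \ref{main1}, and Theorem \ref{main2} is simply the closure of the reduction that Illanes carried out in passing from Question 1 to Question 2.
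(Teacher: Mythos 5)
Your proposal is correct and follows the paper's own argument exactly: Theorem \ref{main2} is deduced by applying Illanes' Theorem A with $X=S$, the hypothesis being supplied by Theorem \ref{main1}. The explicit remark that the fupcon property is symmetric under swapping factors is a small point the paper leaves implicit, but it is the same proof.
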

In 1985 M. Smith \cite{Smith} constructed a nonmetric pseudo-arc $\mathcal{M}$; i.e. a Hausdorff chainable, homogeneous, hereditary equivalent and hereditary indecomposable continuum. This continuum has been recently used by the first and third author to provide a new counterexample to Wood's Conjecture in the isometric theory of Banach spaces \cite{BSJMMA}. Relying on the result of Bellamy and \L ysko that products of metric pseudo-arcs have the fupcon property, we shall show that their result holds also for products of $\mathcal{M}$. 
\begin{theorem}\label{main4}
Let $\mathcal{M}$ be Smith's nonmetric pseudo-arc. Any Cartesian power of $\mathcal{M}$ has the fupcon property. 
\end{theorem}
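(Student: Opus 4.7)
The plan is to reduce Theorem~\ref{main4} to the Bellamy--\L ysko theorem for Cartesian powers of metric pseudo-arcs via the inverse limit structure of Smith's nonmetric pseudo-arc $\mathcal{M}$. Smith's construction realizes $\mathcal{M}$ as an inverse limit $\varprojlim\{P_\alpha, f_{\beta\alpha}\}_{\alpha\in\Lambda}$ of metric pseudo-arcs $P_\alpha$ indexed by a long directed set $\Lambda$, with continuous surjective bonding maps arising from Bing-crooked chain refinements. Consequently $\mathcal{M}^n=\varprojlim\{P_\alpha^n, f_{\beta\alpha}^n\}$, with coordinate-wise projections $\Phi_\alpha\colon \mathcal{M}^n\to P_\alpha^n$.

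Now fix a subcontinuum $M\subseteq \mathcal{M}^n$ with full projections and an open neighborhood $U\supseteq M$. The first step is to apply the standard compactness-based lemma for inverse limits of compact Hausdorff spaces in order to locate an index $\alpha\in\Lambda$ and an open set $W\subseteq P_\alpha^n$ with $M\subseteq \Phi_\alpha^{-1}(W)\subseteq U$. Since each $\Phi_\alpha$ is surjective and $M$ projects onto every $\mathcal{M}$-factor, $\Phi_\alpha(M)$ is a continuum in $P_\alpha^n$ with full projections onto every coordinate of $P_\alpha^n$. Applying the Bellamy--\L ysko theorem to the metric pseudo-arc $P_\alpha$ then produces an open connected set $V_\alpha\subseteq P_\alpha^n$ with $\Phi_\alpha(M)\subseteq V_\alpha \subseteq W$, and I would set $V:=\Phi_\alpha^{-1}(V_\alpha)$, which is open and sandwiched as $M\subseteq V\subseteq U$.

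The main obstacle is to show that $V$ is connected, since the Bing-crooked bonding maps do not in general pull back connected open sets to connected open sets and the fibers of $\Phi_\alpha$ are typically disconnected. I would address this by choosing $V_\alpha$ specifically as the union of a connected subfamily of links of an $n$-dimensional crooked grid cover of $P_\alpha^n$, so that connectedness of $V_\alpha$ is witnessed combinatorially by successive link overlaps rather than by local arc-connectedness. The nonmetric chainability of $\mathcal{M}$, inherent in Smith's construction, then permits lifting these crooked grid covers to analogous covers of $\mathcal{M}^n$ whose link-overlap pattern mirrors that on $P_\alpha^n$; a combinatorial chain-connectedness argument, in parallel with the metric case of Bellamy and \L ysko, would then yield connectedness of the corresponding lifted subfamily, whose union is $V$. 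Verifying that the crookedness estimates transfer faithfully through Smith's transfinite construction is the central technical point; once this is in place, the proof closes by the same reasoning as in the metric case.
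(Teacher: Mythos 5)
Your reduction is the right one and matches the paper's overall strategy: represent $\mathcal{M}$ as a long inverse limit of metric pseudo-arcs $P_\alpha$, push $M$ and its neighborhood down to some stage $\alpha$, invoke the Bellamy--\L ysko theorem there to obtain a connected open $V_\alpha$, and pull back. (Your compactness step locating $\alpha$ with $M\subseteq\Phi_\alpha^{-1}(W)\subseteq U$ is in fact a point the paper treats more casually than you do.) The gap is in the step you yourself flag as the ``main obstacle'': connectedness of $V=\Phi_\alpha^{-1}(V_\alpha)$. You assert that the bonding maps in Smith's construction are Bing-crooked maps whose fibers are typically disconnected, and you then outline an uncompleted program of lifting crooked grid covers and running a combinatorial chain-connectedness argument, explicitly deferring ``the central technical point.'' As written this leaves the decisive step unproved, and the proposed route is aimed at a difficulty that is not actually present.

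What you are missing is the actual structure of Smith's construction: $\mathcal{M}=\varprojlim\{P_\alpha,p^\beta_\alpha:\alpha<\beta<\omega_1\}$, where each bonding map $p^\beta_\alpha$ is \emph{monotone}, open and closed, with each point-preimage $(p^\beta_\alpha)^{-1}(x)$ a pseudo-arc; this is a transfinite iteration of Lewis-type continuous decompositions into pseudo-arcs, not the metric Bing inverse limit of arcs with crooked bonding maps. Consequently the coordinate projection $\Phi_\alpha\colon\mathcal{M}^n\to P_\alpha^n$ is itself a monotone, open and closed surjection (its fibers are products of pseudo-arcs, hence continua), and the preimage of a connected set under a monotone closed surjection is connected --- this is Theorem 6.1.29 of Engelking, which is exactly what the paper cites. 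With that one fact, $V=\Phi_\alpha^{-1}(V_\alpha)$ is open and connected and the proof closes immediately; no crookedness estimates or cover liftings are needed. So your argument has a genuine hole precisely where the paper has a one-line citation, and the hole comes from misidentifying the bonding maps of $\mathcal{M}$.
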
 
Earlier, Lewis showed \cite{Le} that for any 1-dimensional continuum $X$ there exists a continuum $X_P$ that admits a continuous decomposition into pseudo-arcs, and whose decomposition space is homeomorphic to $X$. Recently, Boro\'nski and Smith \cite{BoronskiSmith} extended Lewis' result to continuous curves of Smith's nonmetric pseudo-arc. In particular, given any metric 1-dimensional continuum $X$ there exists a continuum $X_\mathcal{M}$ that admits a continuous decomposition into nonmetric pseudo-arcs, and whose decomposition space is homeomorphic to $X$. $X_\mathcal{M}$ can be seen as ``$X$ of nonmetric pseudoarcs''. Here we observe that using the method of proof of Theorem \ref{main4} one obtains the following generalization.
\begin{corollary}\label{main5}
Suppose $X$ and $Y$ are metric 1-dimensional continua such that  $X_P\times Y_P$ has the fupcon property. Then $X_\mathcal{M}\times Y_\mathcal{M}$ has the fupcon property.
\end{corollary}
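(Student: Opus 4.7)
The plan is to follow the scheme used in the proof of Theorem \ref{main4} essentially verbatim, replacing the two-fold product $\mathcal{M}\times\mathcal{M}$ by $X_\mathcal{M}\times Y_\mathcal{M}$. The key structural input from \cite{Smith} and \cite{BoronskiSmith} is that the nonmetric spaces $X_\mathcal{M}$ and $Y_\mathcal{M}$ are constructed as transfinite inverse limits whose factor spaces are (up to homeomorphism) copies of the metric models $X_P$ and $Y_P$, and the bonding maps are continuous surjections with connected fibers that respect the corresponding pseudo-arc decompositions. Thus the product $X_\mathcal{M}\times Y_\mathcal{M}$ is the inverse limit of copies of $X_P\times Y_P$, with bonding maps that are products of two such surjections.

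The argument then runs as follows. Fix a subcontinuum $M\subseteq X_\mathcal{M}\times Y_\mathcal{M}$ with full projections onto both coordinate spaces, and let $U$ be an open neighborhood of $M$. First, exploit the inverse-limit presentation: by a standard compactness argument (valid for transfinite inverse limits of compact Hausdorff spaces), choose a sufficiently late index in the inverse system to produce a continuous surjection
\[
\pi=\pi_X\times\pi_Y:X_\mathcal{M}\times Y_\mathcal{M}\longrightarrow X_P\times Y_P
\]
and an open set $W\subseteq X_P\times Y_P$ with $\pi(M)\subseteq W$ and $\pi^{-1}(W)\subseteq U$. Second, observe that $\pi(M)$ is a subcontinuum of $X_P\times Y_P$ with full projections onto $X_P$ and $Y_P$: fullness is preserved because $\pi_X$ and $\pi_Y$ are surjective and the respective coordinate projections commute with $\pi$. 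Third, invoke the hypothesis that $X_P\times Y_P$ has the fupcon property to obtain an open connected set $V'$ with $\pi(M)\subseteq V'\subseteq W$, and set $V:=\pi^{-1}(V')$. Then $V$ is open, $M\subseteq V\subseteq U$, and it remains to verify that $V$ is connected.

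The connectivity of $V$ is the step that requires care, and is the main obstacle. Since each bonding map in the inverse system defining $X_\mathcal{M}$ (resp.\ $Y_\mathcal{M}$) has connected fibers — these fibers are themselves continua related to the nonmetric chainable construction of $\mathcal{M}$ — the map $\pi$ has connected fibers. A continuous surjection of compact Hausdorff spaces with connected fibers sends connected sets to connected sets and pulls connected sets back to connected sets, so $V=\pi^{-1}(V')$ is connected. This establishes property $(*)$ for $M$, and hence the fupcon property for $X_\mathcal{M}\times Y_\mathcal{M}$. The role of the hypothesis that $X$ and $Y$ are metric $1$-dimensional continua is exactly what is needed to guarantee the existence of $X_P$, $Y_P$, $X_\mathcal{M}$, $Y_\mathcal{M}$ via the theorems of Lewis and of Boro\'nski--Smith cited above.
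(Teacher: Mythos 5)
Your proposal is correct and follows exactly the argument the paper intends for Corollary \ref{main5} (which it leaves to the reader as ``analogous to Theorem \ref{main4}''): project $M$ and its neighborhood to a single coordinate copy of $X_P\times Y_P$ in the transfinite inverse system, apply the fupcon hypothesis there, and pull the connected open set back through the monotone (hence, between compact Hausdorff spaces, closed and connectedness-preserving under preimages) projection. Your added compactness step guaranteeing $\pi^{-1}(W)\subseteq U$ is a sensible refinement that the paper's own proof of Theorem \ref{main4} glosses over --- there $W_\alpha=\Gamma_\alpha(W)$ is used and $\Gamma_\alpha^{-1}(V_\alpha)\subseteq W$ is asserted without justification --- but otherwise the two arguments coincide.
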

\section{Proofs}
\begin{proof}(of Theorem \ref{main1})
We shall assume that $S$ is the 2-adic solenoid, and give a proof for $[1,2]\times S$. For other solenoids the proof is analogous. We shall use the following inverse limit representation of $[1,2]\times S$:
$$[1,2]\times S=\lim_{\leftarrow}\{[0,1]\times \mathbb{S}_i, \operatorname{id}\times z^2_i\},$$
where $z^2_i:\mathbb{S}_{i+1}\to\mathbb{S}_i$ is the doubling map on the unit circle. For convenience we set $[1,2]\times\mathbb{S}_i=\mathbb{A}_i=\{(r,\theta):1\leq r\leq2,0\leq\theta<2\pi\}$ in polar coordinates, and $\tau_i=\operatorname{id}\times z^2_i$. Then $\tau_i:\mathbb{A}_{i+1}\to\mathbb{A}_i$ is the $2$-fold covering map given by $\tau_i(r,\theta)=(r,2\theta\mod2\pi)$ for every positive integer $i$. Let $M\subseteq [1,2]\times S$ be a continuum with full projections onto both coordinate spaces. Let $\Pi_i:[1,2]\times S\to\mathbb{A}_i$ be the projection. 
\begin{claim}\label{claim1}
$M_i=\Pi_i(M)$ is essential in $\mathbb{A}_i$ for every $i$. 
\end{claim}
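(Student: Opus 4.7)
The plan is to argue by contradiction, exploiting that $\tau_i$ is a degree-$2$ covering map. Suppose $M_i$ is inessential in $\mathbb{A}_i$. By the standard characterization of inessential continua in an annulus, this is equivalent to saying that $M_i$ fails to separate the boundary circles $\{1\}\times\mathbb{S}_i$ and $\{2\}\times\mathbb{S}_i$, so some component of $\mathbb{A}_i\setminus M_i$ meets both; choosing a simple arc inside that component with endpoints on opposite boundary circles yields a ``slit'' $L_i\subseteq\mathbb{A}_i$ disjoint from $M_i$.

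Next I would pull $L_i$ back by $\tau_i$. Since $\tau_i$ is the identity in the radial coordinate and doubles the angular one, $\tau_i^{-1}(L_i)$ consists of two disjoint slits in $\mathbb{A}_{i+1}$ placed $\pi$ radians apart, and these together disconnect $\mathbb{A}_{i+1}$ into two open ``half-annulus'' components, each of whose projection onto $\mathbb{S}_{i+1}$ is a proper open arc. Because $\tau_i(M_{i+1})=M_i$ misses $L_i$, the continuum $M_{i+1}=\Pi_{i+1}(M)$ misses $\tau_i^{-1}(L_i)$, and by connectedness must lie inside a single one of those components; hence the image of $M_{i+1}$ under the projection $\mathbb{A}_{i+1}\to\mathbb{S}_{i+1}$ is a proper subset of $\mathbb{S}_{i+1}$.

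This contradicts the surjectivity of that projection on $M_{i+1}$, which holds because $\pi_S(M)=S$ and the natural inverse-limit projection $S\to\mathbb{S}_{i+1}$ is onto. Therefore $M_i$ must be essential. The only potentially delicate step is the appeal to the ``inessential iff avoided by a slit'' characterization in the annulus; once that is in place, the rest is a routine separation-of-lifts argument for a covering map of degree at least $2$, and it adapts without change to the $n$-fold bonding maps of an arbitrary solenoid.
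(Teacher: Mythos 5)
There is a genuine gap in your second step. The cross-cut $L_i$ is an arbitrary arc, lying in a complementary component of $M_i$ and joining the two boundary circles, and nothing forces it to be (even isotopic, relative to the boundary circles, to) a radial segment. Consequently its two lifts in $\mathbb{A}_{i+1}$ are not ``placed $\pi$ radians apart'' in any sense that controls angular coordinates, and the two components of $\mathbb{A}_{i+1}\setminus\tau_i^{-1}(L_i)$, while indeed open disks, need not project onto proper arcs of $\mathbb{S}_{i+1}$. Concretely, let $M_i$ be an arc spiralling from the inner to the outer boundary circle while winding $n$ times around, with $n>2$: it is inessential, but every radial segment meets it, so any admissible slit $L_i$ must itself wind along the spiral, and $M_{i+1}$ (a lift of $M_i$) still winds $n/2>1$ times and hence has full angular projection onto $\mathbb{S}_{i+1}$. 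So a single application of $\tau_i^{-1}$ does not produce the desired contradiction. This is not a pathological situation either: since $M$ projects onto all of $S$, every $M_i$ projects onto all of $\mathbb{S}_i$, so the winding case is exactly the one that must be handled (this is what the paper's figure illustrates).

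The repair is the actual content of the paper's proof: one must pass to arbitrarily deep levels $i+j$, not just to $i+1$. Because $M_i$ is inessential it is contained in a closed disk, hence lifts to a compact (in particular, angularly bounded) homeomorphic copy $\tilde M$ in the universal cover $\tilde{\mathbb{A}}$; projecting $\tilde M$ down to $\mathbb{A}_{i+j}$ divides its angular extent by $2^j$, so for $j$ large the continuum $M_{i+j}$ has proper projection onto $\mathbb{S}_{i+j}$, contradicting $\pi_S(M)=S$. Your slit argument could be salvaged in the same spirit by iterating the pullback $j$ times and tracking the winding of the slit, but as written the one-step covering argument is insufficient, and the claim that it ``adapts without change'' to other bonding maps inherits the same flaw. (Your reduction of inessentiality to the existence of a cross-cut disjoint from $M_i$ is fine and is essentially equivalent to the paper's ``contained in a closed disk'' formulation.)
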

\begin{proof}(of Claim \ref{claim1})
Recall that a continuum $C$ is essential in an annulus $\mathbb{A}$ if it separates the two components of the boundary. First note that $\mathbb{A}_{i+j}$ is the $2^{j}$-fold cover of $\mathbb{A}_i$ with the covering map given by 
$$\tau_{i,j}=\tau_i\circ\ldots\circ\tau_{i+j}.$$
In addition, if $\tilde{\mathbb{A}}=\{(r,\theta):1\leq r\leq2,-\infty<\theta<\infty\}$ is the universal cover, and $\phi_k:\tilde{\mathbb{A}}\to\mathbb{A}_k$ is given by $\phi_k(r,\theta)=(r,2^k\theta\mod2\pi)$ then $\phi_i=\tau_{i,j}\circ\phi_{i+j}$. 
\begin{figure}[ht]
	\centering
		\includegraphics[width=0.49\textwidth]{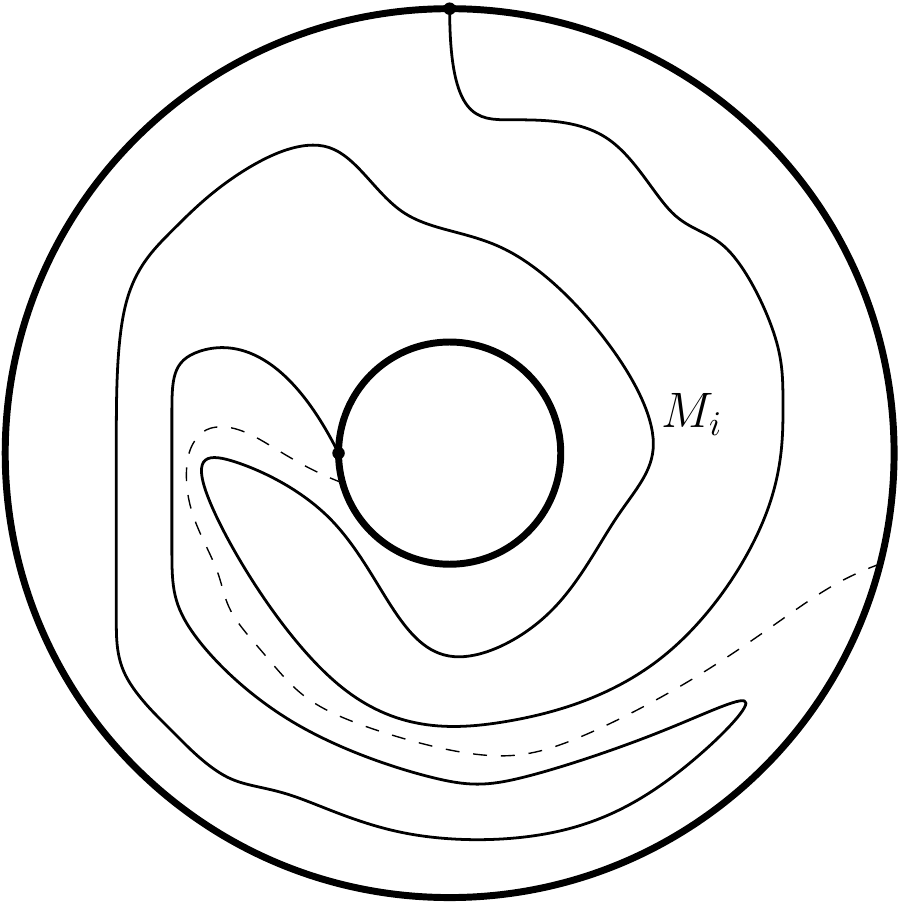}
	\caption{Proof of Claim \ref{claim1}: an inessential continuum $M_i$ with full projections in $[1,2]\times \mathbb{S}_i$}
	\label{fig:fupconpic}
\end{figure}
By contradiction suppose $M_i$ is inessential in $\mathbb{A}_i$. Then $M_i$ is contained in a closed disk $D_i$. Since $D_i$ is simply connected, for each positive integer $i$ any component of $\tau^{-1}_{i}(M_{i})$ is a homeomorphic copy of $M_i$. In particular $M_{i+j}$ is homeomorphic to $M_i$ for each $j$. In addition,
$$\lim_{j\to\infty}\operatorname{diam}(M_{i+j})=0.$$
Indeed, since any component $\tilde M$ of $\phi^{-1}_i(M_i)$ in the universal cover $\tilde{\mathbb{A}}$ is bounded, one can take $j$ large enough so that the projection $N_{i+j}=\phi_{i+j}(\tilde{M})$ from $\tilde{\mathbb{A}}$ onto $\mathbb{A}_{i+j}$ is as small as desired. In particular, it is true when $N_{i+j}=M_{i+j}$. Therefore, there exists a $j_o$ such that the projection of $M_{i+j}\subseteq\mathbb{A}_{i+j}$ onto $\mathbb{S}_{i+j}$ is a proper subset of $\mathbb{S}_{i+j}$. This implies that $M$ does not have a full projection onto $S$, resulting in a contradiction and completing the proof of Claim \ref{claim1}. 
\end{proof}
\begin{claim}\label{claim2}
$\tau^{-1}_i(M_i)=M_{i+1}$ for every $i$; i.e. $\tau^{-1}_i(M_i)$ is connected for each $i$. 
\end{claim}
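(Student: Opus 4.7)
My plan is to prove $\tau_i^{-1}(M_i)=M_{i+1}$ by contradiction, showing that the alternative — that $\tau_i^{-1}(M_i)$ is disconnected — is incompatible with Claim~\ref{claim1}. First I would record the trivial inclusion $M_{i+1}\subseteq\tau_i^{-1}(M_i)$, which follows from $\tau_i\circ\Pi_{i+1}=\Pi_i$ and hence $\tau_i(M_{i+1})=M_i$. Letting $\sigma\colon\mathbb{A}_{i+1}\to\mathbb{A}_{i+1}$, $\sigma(r,\theta)=(r,\theta+\pi)$, denote the deck transformation of the double cover $\tau_i$, a direct check gives $\tau_i^{-1}(M_i)=M_{i+1}\cup\sigma(M_{i+1})$; since $M_{i+1}$ and $\sigma(M_{i+1})$ are both continua, $\tau_i^{-1}(M_i)$ is connected exactly when $M_{i+1}\cap\sigma(M_{i+1})\ne\emptyset$.

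Suppose for contradiction that $\tau_i^{-1}(M_i)$ is disconnected, so $M_{i+1}$ and $\sigma(M_{i+1})$ are disjoint continua. The goal is to derive that $M_{i+1}$ is inessential in $\mathbb{A}_{i+1}$, contradicting Claim~\ref{claim1} applied at level $i+1$. Let $U_i$ and $V_i$ denote the components of $\mathbb{A}_i\setminus M_i$ containing the inner and outer boundary circles, respectively; these exist and are distinct by essentiality of $M_i$. The preimages $\tau_i^{-1}(U_i)$ and $\tau_i^{-1}(V_i)$ are open and disjoint, and each is connected: any point in either preimage can be joined by lifting a path in $U_i$ (resp.\ $V_i$) to the inner (resp.\ outer) boundary circle of $\mathbb{A}_{i+1}$, which is itself connected and contained in that preimage.

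The crux of the argument is that $\sigma(M_{i+1})\subseteq\mathbb{A}_{i+1}\setminus M_{i+1}$ bridges the two open sets $\tau_i^{-1}(U_i)$ and $\tau_i^{-1}(V_i)$: each $y\in\sigma(M_{i+1})$ projects to $\tau_i(y)\in M_i$, a point in the closures of both $U_i$ and $V_i$ as a boundary point between these complementary components, and the local-homeomorphism property of $\tau_i$ lifts sequences accordingly to show $y\in\overline{\tau_i^{-1}(U_i)}\cap\overline{\tau_i^{-1}(V_i)}$. Consequently
\[
\tau_i^{-1}(U_i)\cup\sigma(M_{i+1})\cup\tau_i^{-1}(V_i)
\]
is a connected subset of $\mathbb{A}_{i+1}\setminus M_{i+1}$. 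Together with any additional components of $\mathbb{A}_{i+1}\setminus\tau_i^{-1}(M_i)$ beyond $\tau_i^{-1}(U_i)$ and $\tau_i^{-1}(V_i)$, each similarly adjacent to $\sigma(M_{i+1})$, this exhausts $\mathbb{A}_{i+1}\setminus M_{i+1}$ as a connected set. Hence $M_{i+1}$ does not separate the boundary circles of $\mathbb{A}_{i+1}$ and is therefore inessential, contradicting Claim~\ref{claim1}.

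The step I anticipate being most delicate is justifying the closure statement $\sigma(M_{i+1})\subseteq\overline{\tau_i^{-1}(U_i)}\cap\overline{\tau_i^{-1}(V_i)}$ when $M_i$ has complicated local structure — for instance points of $M_i$ bordering only one of $U_i,V_i$ or additional complementary components of $\mathbb{A}_i\setminus M_i$ — so that one may need to invoke the structure of $M_i=\Pi_i(M)$ as the projection of a continuum with full projections in order to rule out such pathologies.
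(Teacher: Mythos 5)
Your reduction is the right one and matches the paper's: with $\sigma$ the deck transformation of the double cover, $\tau_i^{-1}(M_i)=M_{i+1}\cup\sigma(M_{i+1})$, and the whole content is that these two continua meet. But the route you take to a contradiction --- exhibiting a connected subset of $\mathbb{A}_{i+1}\setminus M_{i+1}$ joining the two boundary circles --- has gaps beyond the one you flag. First, $U_i$ and $V_i$ are not well defined: since $M$ projects onto all of $[1,2]$, the continuum $M_i$ meets both boundary circles of $\mathbb{A}_i$, so neither boundary circle is contained in $\mathbb{A}_i\setminus M_i$ and there need not be a single complementary component ``containing'' it. Second, even after repairing the definition, $\tau_i^{-1}(U_i)$ need not be connected: the preimage of a connected open set under the double cover is connected only if that set carries an essential loop, and a complementary domain of an essential continuum with full projections can be simply connected (take $M_i$ to be the core circle union a radial arc; both complementary domains are disks and each has disconnected preimage). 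Your path-lifting argument silently assumes the inner boundary circle of $\mathbb{A}_{i+1}$ lies in $\tau_i^{-1}(U_i)$, which it does not. Third, as you anticipate, the closure statement $\sigma(M_{i+1})\subseteq\overline{\tau_i^{-1}(U_i)}\cap\overline{\tau_i^{-1}(V_i)}$ is false in general, since a point of $M_i$ may be interior to $M_i$ or may border only one complementary domain (or a different one altogether); nothing about $M_i$ being a projection of $M$ rules this out.

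The paper runs the separation argument on the other side, which avoids any analysis of the complement of $M_i$. Since $\sigma$ is a homeomorphism of $\mathbb{A}_{i+1}$ and $M_{i+1}$ is essential by Claim \ref{claim1}, the set $\sigma(M_{i+1})$ is also essential, hence separates the two boundary circles of $\mathbb{A}_{i+1}$. On the other hand $M_{i+1}$ is a continuum whose projection onto $[1,2]$ is all of $[1,2]$, so it contains points of both boundary circles; a connected set meeting both boundary circles must intersect any continuum separating them. Hence $M_{i+1}\cap\sigma(M_{i+1})\neq\emptyset$, which is the desired contradiction. I would replace your third and fourth paragraphs with this two-line argument; your first paragraph can stay as is.
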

\begin{proof}(of Claim \ref{claim2})
We use a similar argument to that of Example 1 in \cite{Heath}. By contradiction, suppose $\tau^{-1}_i(M_i)$ is disconnected. Without loss of generality let us assume that $i=1$. Then there are two components $M_2$ and $N_2$ of $\tau^{-1}_1(M_1)$, and each of them maps onto $M_1$. They are homeomorphic, since the map $\sigma:\mathbb{A}_i\to \mathbb{A}_i$ given by $\sigma(r,\theta)=\left(r,(\theta+2\pi)\mod 2\pi\right)$ is a homeomorphism with $\sigma(M_2)=N_2$. By Claim \ref{claim1} $M_2$ is essential. Since $M_2$ has a full projection onto $[1,2]$ it must connect the two boundary circles and so there is a point $c\in M_2\cap N_2$. This contradiction implies that $M_2=N_2$ and completes the proof of Claim \ref{claim2}.  
\end{proof}
To finish the proof of Theorem \ref{main1} let $W$ be an open neighborhood of $M$. Note that since $\mathbb{A}_1$ is locally connected, $M_1$ has arbitrarily small connected open neighborhoods. So if $W_1=\Pi_1(W)$ then there exists a connected open neighborhood $U_1$ such that $M_1\subseteq U_1\subseteq W_1$. Reasoning as above in Claim \ref{claim2}, we deduce that $U_2=\tau^{-1}_1(U_1)$ is connected, and then proceeding by induction that $U_{i+1}=\tau^{-1}_i(U_i)$ we obtain that $U_{i+1}$ is connected for each $i$. Consequently $U=(\tau^{-1}_i(U_i):i=1,2,\ldots)$ is an open and connected set such that $M\subseteq U\subseteq W$ and Theorem \ref{main1} is proved. 
\end{proof}

\begin{proof}(of Theorem \ref{main2})
This follows from Illanes' Theorem A, since by Theorem \ref{main1} $[0,1]\times S$ has the fupcon property. 
\end{proof}

\begin{proof}(of Theorem \ref{main4}) For simplicity of notation we prove it for the product of two Smith's pseudo-arcs. The general case is similar thanks to the result of Bellamy and Lysko for arbitrary products of pseudo-arcs. 

We consider $\mathcal{M}$ as the following long inverse limit
$$\mathcal{M}=\lim_{\leftarrow}\{P_\alpha,p^{\beta}_{\alpha}:\alpha<\beta<\omega_1\},$$
where each $P_\alpha$ is a metric pseudo-arc, and $p^{\beta}_{\alpha}:P_\beta\to P_\alpha$ is an open, closed and monotone map, such that $(p^{\beta}_{\alpha})^{-1}(x)$ is a pseudo-arc contained in $P_\beta$ for each $x\in P_\alpha$. Consider the Cartesian square of $\mathcal{M}$ as the following inverse limit.
$$\mathcal{M}\times \mathcal{M}'=\lim_{\leftarrow}\{P_\alpha\times P'_\alpha,p^{\beta}_{\alpha}\times q^{\beta}_{\alpha}:\alpha<\beta<\omega_1\}.$$
Let $\Gamma_\alpha:\mathcal{M}\times\mathcal{M}\to P_\alpha\times P'_\alpha$ be given by
$$\Gamma_\alpha((\{x_\alpha\}_{\alpha<\omega_1},\{y_\alpha\}_{\alpha<\omega_1}))=(x_\alpha,y_\alpha).$$ 
Note that $\Gamma_\alpha$ is monotone (i.e. pre-images of points are connected), open and closed for each $\alpha<\omega_1$. Let $M\subseteq \mathcal{M}\times\mathcal{M}$ be a continuum with full projections onto both coordinate spaces, and $W$ be an open set around $M$. Then the projection of $W$ onto the square of $\alpha$th coordinate spaces $W_\alpha=\Gamma_\alpha(W)$ is an open set around the continuum $M_\alpha=\Gamma_\alpha(M)$. Since $M_\alpha$ has full projections onto both coordinate spaces $P_\alpha$ and $P'_\alpha$, by Theorem 4.4 in \cite{Bellamy}, there exists an open and connected set $V_\alpha$ such that $M_\alpha\subseteq V_\alpha\subseteq W_\alpha$. By Theorem 6.1.29. in \cite{engelking}, p.358, it follows that $V=\Gamma_{\alpha}^{-1}(V_\alpha)$ is an open and connected set, such that $M\subseteq V\subseteq W$. This completes the proof.   
\end{proof}
The proof of Corollary \ref{main5} is analogous to the one of Theorem \ref{main4}, and is left to the reader. We conclude with the following questions.
\vspace{0.25cm}

\noindent
\textbf{Question 3.(Bellamy\&\L ysko, \cite{Bellamy})}\textit{ Does the product of two nonhomeomorphic solenoids have the fupcon property?}
\vspace{0.25cm}

\noindent
\textbf{Question 4.(Illanes \cite{Illanes})}\textit{Let $X$ and $Y$ be chainable Kelley continua. Does $X
\times Y$ have the fupcon property?}
\vspace{0.25cm}

\noindent
\textbf{Question 5.}\textit{ Suppose $X$ and $Y$ are 1-dimensional continua such that $X\times Y$ has the fupcon property. Does the product $X_P\times Y_P$ have the fupcon property?}
\vspace{0.25cm}

\noindent
\textbf{Question 6.}\textit{ Does the product of $[0,1]$ and pseudo-circle have the fupcon property?}
\vspace{0.25cm}

\noindent
\textbf{Question 7.}\textit{ Does the product of a pseudo-arc and pseudo-circle have the fupcon property?}
\vspace{0.25cm}

\noindent
\textbf{Question 8.}\textit{ Does the product of two pseudo-circles have the fupcon property?}

\section*{Acknowledgements}
The first author is grateful to J. Prajs for drawing his attention to the topic of this paper and some informative conversations during the 32nd Summer Conference on Topology and its Applications in June 2017 at the University of Dayton. It was during that conference and the following two weeks when an important part of this collaboration was carried out. It was made possible thanks to the support from the Moravian-Silesian Region of the Czech Republic by MSK grant 01211/2016/RRC ``Strengthening international cooperation in science, research and education''. This work was also supported by the NPU II project LQ1602 IT4Innovations excellence in science.

\end{document}